\newtheorem{theorem}{Theorem}[section]
\newtheorem{lemma}[theorem]{Lemma}
\newtheorem{proposition}[theorem]{Proposition}
\newtheorem{example}[theorem]{Example}
\newcommand*\xbar[1]{\hbox{\vbox{\hrule height 0.5pt \kern0.5ex\hbox{\kern-0.1em\ensuremath{#1}\kern-0.1em}}}}
\begin{document}

\author{Rafa{\l} Kapica, Janusz Morawiec}
\title{Inhomogeneous refinement equations with random affine maps}
\email[R. Kapica]{rkapica@math.us.edu.pl}
\email[J. Morawiec]{morawiec@math.us.edu.pl}
\address{Institute of Mathematics, University of Silesia, Bankowa 14, 40-007 Katowice, Poland}

\begin{abstract}
Given a probability space $(\Omega,{\mathcal A},P)$, random variables $L,M\colon\Omega\to\mathbb R$ and $g\in L^1(\mathbb R)$ we obtain two characterizations of these $f\in L^1(\mathbb R)$ which are solutions of the inhomogeneous refinement equation  with a random affine map of the form $f(x)=\int_\Omega |L(\omega)|f(L(\omega)x-M(\omega))P(d\omega)+g(x)$.
\end{abstract}

\keywords{inhomogeneous refinement equations, integrable solutions, Fourier transforms,
Radon-Nikodym derivatives, perpetuities}
\subjclass[2010]{30B99, 37H99, 39B12, 39B22, 45G99}

\maketitle

\section{Introduction}

Discrete inhomogeneous refinement equations of the form
$$
f(x)=\sum_{n\in\mathbb Z}c_nf(kx-n)+g(x)
$$ 
have been used in \cite{GHM1994} for a construction of  multiwavelets from a "fractal equation", in \cite{SN1996} for a construction of boundary wavelets and in \cite{SZ1998} for the study of convergence of cascade algorithms. Discrete inhomogeneous refinement equations and their continuous counterparts of the form
$$
f(x)=\int_{\mathbb R}f(kx-y)d\mu(y)+g(x)
$$ 
were studied in \cite{JJS2000} to provide a uniform and a complete characterization of the existence of their distributional solutions to cover all cases of interest. Some other motivations for the study of  discrete as well as continuous  inhomogeneous refinement equations can be found in  \cite{DH1998, DH1999, JJS2001,  JL2012, JLZ2009, L2000, L2001, L2003, L2004,  LS2010, LH1999, SZ2001, S2001,   Z2001}.
 
Motivation to write this paper comes from applications, because from that point of view all results on  the existence of "good" solutions of homogeneous as well as on inhomogeneous refinement equations are very important (see  \cite{C2000} where it is showed how nonexistence of "good" solutions of a refinement equation can lead to anomalous behavior of numerical methods for a construction of wavelets). Let us note that refinement equations always have plenty of "bad" solutions, even extremely strange (see e.g. \cite{BM2006, KM2013, M2001}). 

A common extension, both the above mentioned inhomogeneous refinement equations, is the following inhomogeneous refinement equation 
\begin{equation}\label{e}
f(x)=\int_{\Omega}|L(\omega)|f(L(\omega)x-M(\omega))P(d\omega)+g(x)
\end{equation}
with a random affine map $\varphi\colon \Omega\times\mathbb R\to\mathbb R$ defined by
\begin{equation}\label{rap}
\varphi(x,\omega)=L(\omega)x-M(\omega),
\end{equation}
where $L,M\colon\Omega\to\mathbb R$ are given random variables and $(\Omega,{\mathcal A},P)$ is a complete probability space. For more details on equation (\ref{e}) with $g=0$ we refer the reader to the survey paper \cite{KM2013} and the references therein.

First of all observe that if $g\colon\mathbb R\to\mathbb R$ is an integrable function and $f,h\colon\mathbb R\to\mathbb R$ are two representatives of a function from $L^1(\mathbb R)$ such that $(\ref{e})$ holds for almost all $x\in\mathbb R$, then $h$ also satisfies (\ref{e}) for almost all $x\in\mathbb R$; details are included in \cite{KM}. This observation allows us to accept the following definition: We say that $f\in L^1(\mathbb R)$ is an integrable solution of equation (\ref{e}), if (\ref{e}) holds for almost all $x\in\mathbb R$ (with respect to the one-dimensional Lebesgue measure).

Given $g\in L^1(\mathbb R)$ let $V_g$ denote the set of all integrable solutions of (\ref{e}). It is obvious that $V_g$ is a linear subspace of $L^1(\mathbb R)$ if and only if $g=0$. Moreover, if $g\neq 0$, then $V_g$ is uniquely determined by the subspace $V_0$ and by an arbitrary particular integrable solution of (\ref{e}), if exists any. It is also easy to see that equation (\ref{e}) has at most one integrable solutions (which is clearly nontrivial) if and only if $V_0=\{0\}$. 

In this paper we are interested in the set $V_g$. More precisely, we obtain two characterizations of these functions $f\in L^1(\mathbb R)$ which belong to the set $V_g$. Moreover, we also get results guaranteeing that the set $V_g$ is nonempty or consists of at most one member.

\section{Characterization of $V_g$ by Fourier transforms} 

From now on we fix a complete probability space $(\Omega,{\mathcal A},P)$, random variables $L,M\colon\Omega\to\mathbb R$, and a nontrivial function $g\in L^1(\mathbb R)$. 

If $P(L=0)>0$, then equation (\ref{e}) has exactly one integrable solution (see \cite[Corollary 3.4]{KM}, cf.  \cite[Remark 5.1]{KM2012}), and hence the set $V_g$ consists of exactly one member. In this paper it is assumed that 
$$
P(L=0)=0.
$$

Here and throughout, the Fourier transform of a function $f\in L^1(\mathbb R)$ is defined by 
$\widehat{f}(x)=\int_{\mathbb R}\exp\{itx\}f(t)dt$.

We begin with rewriting equation (\ref{e}) in the language of the Fourier transform. 

\begin{lemma}\label{lem1}
Assume $f\in L^1(\mathbb R)$. Then $f\in V_g$  if and only if 
\begin{equation}\label{2}
\widehat{f}(x)=\int_{\Omega}\exp\left\{ix\frac{M(\omega)}{L(\omega)}\right\}\widehat{f}\left(\frac{x}{L(\omega)}\right)P(d\omega)+\widehat{g}(x)
\end{equation}
for every $x\in\mathbb R$.
\end{lemma}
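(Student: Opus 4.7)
The plan is to take Fourier transforms of both sides of equation~(\ref{e}) and show that the resulting identity is exactly (\ref{2}), then appeal to uniqueness of the Fourier transform on $L^1(\mathbb R)$ for the converse.

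First I would verify that the operator
$$
(Tf)(t)=\int_{\Omega}|L(\omega)|f(L(\omega)t-M(\omega))\,P(d\omega)
$$
sends $L^1(\mathbb R)$ into itself, so that all objects in sight are genuine $L^1$ functions and the Fourier transform of the right-hand side of (\ref{e}) is well defined. This is a Fubini computation: since $P(L=0)=0$, for almost every fixed $\omega$ the change of variables $s=L(\omega)t-M(\omega)$ gives $\int_{\mathbb R}|f(L(\omega)t-M(\omega))|\,|L(\omega)|\,dt=\|f\|_1$, so the iterated integral of the absolute values is bounded by $\|f\|_1<\infty$ and Fubini applies.

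Next I would compute $\widehat{Tf}(x)$ by swapping the two integrals (justified by the same Fubini argument applied with the bounded factor $e^{itx}$) and then performing, for each $\omega$, the change of variables $s=L(\omega)t-M(\omega)$ inside the inner Fourier integral. This yields
$$
\widehat{Tf}(x)=\int_{\Omega}e^{ixM(\omega)/L(\omega)}\,\widehat{f}(x/L(\omega))\,P(d\omega),
$$
exactly the integral term on the right-hand side of (\ref{2}). Adding $\widehat{g}(x)$ therefore gives the Fourier transform of $Tf+g$.

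Finally, the two directions of the lemma follow from standard facts. If $f\in V_g$, then $f=Tf+g$ in $L^1$, so their Fourier transforms coincide and (\ref{2}) holds; it holds for every $x\in\mathbb R$ because both sides are continuous functions on $\mathbb R$ (Fourier transforms of $L^1$ functions are continuous, and the integral on the right is continuous in $x$ by dominated convergence, using $|\widehat{f}(x/L(\omega))|\le\|f\|_1$ as a uniform bound). Conversely, if (\ref{2}) holds, then $\widehat{f}=\widehat{Tf+g}$ pointwise, hence $f=Tf+g$ almost everywhere by injectivity of the Fourier transform on $L^1(\mathbb R)$, so $f\in V_g$. The only real subtlety is the Fubini/change-of-variables justification in the first two steps; everything else is bookkeeping.
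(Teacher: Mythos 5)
Your proposal is correct and follows essentially the same route as the paper: both directions rest on the Fubini theorem combined with the change of variables $s=L(\omega)t-M(\omega)$ (valid since $P(L=0)=0$) to identify the Fourier transform of the averaging term with the integral in (\ref{2}), and the converse is settled by injectivity of the Fourier transform on $L^1(\mathbb R)$, exactly as in the paper's argument that $t\mapsto\int_\Omega|L(\omega)|f(L(\omega)t-M(\omega))P(d\omega)+g(t)$ is an $L^1$ function with the same transform as $f$. Your preliminary check that the operator $T$ maps $L^1(\mathbb R)$ into itself makes explicit a point the paper leaves implicit, but it is the same proof.
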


\begin{proof}
If $f\in V_g$, then using the Fourier transform to both sides of (\ref{e}) and applying the Fubini theorem we conclude that (\ref{2}) holds for every $x\in\mathbb R$.

On the other hand, if (\ref{2}) holds for every $x\in\mathbb R$, then applying the Fubini theorem we obtain
\begin{eqnarray*}
\int_{\mathbb R}\exp\{itx\}f(t)dt&=&
\int_{\Omega}\int_{\mathbb R}\exp\left\{ix\frac{M(\omega)+t}{L(\omega)}\right\}f(t)dtP(d\omega)
+\widehat{g}(x)\\
&=&\int_\Omega\int_{\mathbb R}\exp\{iux\}|L(\omega)|f(L(\omega)u-M(\omega))duP(d\omega)
+\widehat{g}(x)\\
&=&\int_{\mathbb R}\exp\{itx\}\left[\int_\Omega|L(\omega)|f(L(\omega)t-M(\omega))P(d\omega)
+g(t)\right]dt
\end{eqnarray*}
for almost all $x\in\mathbb R$. Since  $t \mapsto \int_{\Omega}|L(\omega)|f(L(\omega)t-M(\omega))P(d\omega)+g(t)$ is an $L^1$-function having the same Fourier transform as $f$, if follows that (\ref{e}) holds for almost all $x\in\mathbb R$.
\end{proof}

Putting $x=0$ in (\ref{2}) we get 
$$
\widehat{g}(0)=0,
$$
which is a necessary condition for the existence of integrable solutions of (\ref{e}).

Given a measurable function $\Psi\colon\Omega\to\mathbb R$ define a sequence $(\Psi_n)_{n\in\mathbb N}$ of random variables, acting on $\Omega^\infty$ with real values, putting
$$
\Psi_n(\varpi)=\Psi(\omega_n)
$$ 
for all $n\in\mathbb N$ and $\varpi=(\omega_1,\omega_2,...)\in\Omega^\infty$.
Next with every $h\in L^1(\mathbb R)$ we associate a sequence $(I_{h,n})_{n\in\mathbb N}$ of real functions defined by
$$
I_{h,n}(x)=\int_{\Omega^\infty}\exp\left\{ix\sum_{k=1}^n\frac{M_k(\varpi)}{L_1(\varpi)\cdots L_k(\varpi)}\right\}\widehat{h}\left(\frac{x}{L_1(\varpi)\cdots L_n(\varpi)}\right)P^\infty(d\varpi)
$$
for every $x\in\mathbb R$.

\begin{lemma}\label{lem2}
Assume that $f\in L^1(\mathbb R)$. Then $f\in V_g$  if and only if 
\begin{equation}\label{3}
\widehat{f}(x)=I_{f,N}(x)+\sum_{n=1}^{N-1}I_{g,n}(x)+\widehat{g}(x)
\end{equation}
for all $N\in\mathbb N$ and $x\in\mathbb R$. 
\end{lemma}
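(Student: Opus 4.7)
The plan is to prove the equivalence by induction on $N$, with Lemma~\ref{lem1} supplying both the base case and the inductive mechanism. Because formula (\ref{3}) is required to hold for \emph{every} $N\in\mathbb N$, one direction of the equivalence is essentially free: if (\ref{3}) holds for all $N$, specialize to $N=1$ and use Lemma~\ref{lem1} to conclude $f\in V_g$. So the real content is showing that $f\in V_g$ implies (\ref{3}) for every $N$.

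For the base case $N=1$, the empty sum vanishes, and $I_{f,1}(x)=\int_{\Omega^\infty}\exp\{ixM_1(\varpi)/L_1(\varpi)\}\widehat{f}(x/L_1(\varpi))P^\infty(d\varpi)$. Since the integrand depends only on the first coordinate $\omega_1$, Fubini on $P^\infty\cong P\otimes P^\infty$ collapses it to $\int_{\Omega}\exp\{ixM(\omega)/L(\omega)\}\widehat{f}(x/L(\omega))P(d\omega)$, so (\ref{3}) at $N=1$ is literally (\ref{2}) and Lemma~\ref{lem1} finishes this step.

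For the inductive step, assume (\ref{3}) at level $N$; to promote it to $N+1$ it suffices to verify the single identity
\[
I_{f,N}(x)=I_{f,N+1}(x)+I_{g,N}(x).
\]
The idea is to apply Lemma~\ref{lem1}, which guarantees that (\ref{2}) holds \emph{pointwise for every} real argument, to $y=x/(L_1(\varpi)\cdots L_N(\varpi))$; substituting the resulting expansion of $\widehat{f}(y)$ into the definition of $I_{f,N}(x)$ splits it into two pieces. The piece containing $\widehat{g}$ reproduces $I_{g,N}(x)$ verbatim. The piece containing $\widehat{f}$ is a double integral over $\Omega^\infty\times\Omega$; relabelling the inner variable as $\omega_{N+1}$ and invoking Fubini on $P^\infty\otimes P\cong P^\infty$ (more precisely, identifying $(\omega_1,\ldots,\omega_N,\omega_{N+1},\omega_{N+2},\ldots)$ with a single element of $\Omega^\infty$) yields exactly $I_{f,N+1}(x)$, because the phase factor produced by the substitution is precisely $\exp\{ix M_{N+1}/(L_1\cdots L_{N+1})\}$, completing the telescoping sum.

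The main obstacle is purely bookkeeping: justifying the Fubini rearrangement and making sure the measure-theoretic reindexing truly produces $I_{f,N+1}$ rather than a shifted variant. Absolute integrability of the integrands is automatic from the uniform bounds $|\widehat{f}|\le\|f\|_1$ and $|\widehat{g}|\le\|g\|_1$ together with $|e^{i\cdot}|=1$, so there is no convergence issue; the care required is notational, in matching the $k$-th phase term and the argument of $\widehat{f}$ at level $N+1$ with what emerges after the substitution at level $N$.
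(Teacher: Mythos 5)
Your proposal is correct and follows essentially the same route as the paper: the base case $N=1$ is Lemma~\ref{lem1} itself, and the inductive step substitutes the expansion (\ref{2}) evaluated at $x/(L_1(\varpi)\cdots L_N(\varpi))$ into $I_{f,N}(x)$, splitting it into $I_{f,N+1}(x)+I_{g,N}(x)$. Your isolation of this identity and the phase-factor bookkeeping match the paper's computation exactly, just stated more explicitly.
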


\begin{proof}
For $N=1$ formula (\ref{3}) coincides with formula (\ref{2}), if we adopt the convention that $\sum_{n=1}^{0}I_{g,n}(x)=0$ for every $x\in\mathbb R$. Hence Lemma \ref{lem1} implies that $f\in V_g$ if and only if for every $x\in\mathbb R$ formula (\ref{3}) holds with $N=1$. Therefore, to finish the proof it is enough to show that formula (\ref{3}) holds for all $N\geq 2$ and $x\in\mathbb R$ assuming that $f\in V_g$. 

Fix $N\in\mathbb N$, $f\in V_g$ and assume that (\ref{3}) holds for every $x\in\mathbb R$. Then by the induction hypothesis and Lemma \ref{lem1} we obtain
\begin{eqnarray*}
\widehat{f}(x)&=&\int_{\Omega^\infty}\exp\left\{ix\sum_{k=1}^N\frac{M_k(\varpi)}{L_1(\varpi)\cdots L_k(\varpi)}\right\}\widehat{f}\left(\frac{x}{L_1(\varpi)\cdots L_N(\varpi)}\right)P^\infty(d\varpi)\\
&&+\sum_{n=1}^{N-1}I_{g,n}(x)+\widehat{g}(x)\\
&=&\int_{\Omega^\infty}\exp\left\{ix\sum_{k=1}^N\frac{M_k(\varpi)}{L_1(\varpi)\cdots L_k(\varpi)}\right\}\left[\int_{\Omega}\exp\left\{ix\frac{M(\omega)}{L_1(\varpi)\cdots L_N(\varpi)L(\omega)}\right\}\right.\\
&&\hspace{3ex}\left.\widehat{f}\left(\frac{x}{L_1(\varpi)\cdots L_N(\varpi)L(\omega)}\right)P(d\omega)+\widehat{g}\left(\frac{x}{L_1(\varpi)\cdots L_N(\varpi)}\right)\right]P^\infty(d\varpi)\\
&&+\sum_{n=1}^{N-1}I_{g,n}(x)+\widehat{g}(x)\\
&=&I_{f,N+1}(x)+\sum_{n=1}^{N}I_{g,n}(x)+\widehat{g}(x)
\end{eqnarray*}
for every $x\in\mathbb R$. The proof is complete.
\end{proof}

We are now in a position to formulate our first result of this section.

\begin{theorem}\label{thm1}
Assume $f\in L^1(\mathbb R)$. If 
\begin{equation}\label{m}
P(M=0)<1
\end{equation}
and the series
\begin{equation}\label{4}
Z_\infty(\varpi)=\sum_{n=1}^\infty\frac{M_n(\varpi)}{L_1(\varpi)\cdots L_n(\varpi)}
\end{equation}
converges absolutely for almost all $\varpi\in\Omega^\infty$, then $f\in V_g$ if and only if
\begin{equation}\label{5}
\widehat{f}(x)=\widehat{f}(0)\int_{\Omega^\infty}\exp\{ixZ_\infty(\varpi)\}P^\infty(d\varpi)+
\sum_{n=1}^\infty I_{g,n}(x)+\widehat{g}(x)
\end{equation}
for every $x\in\mathbb R$.
\end{theorem}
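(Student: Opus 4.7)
The plan is to apply Lemma~\ref{lem2} and pass to the limit $N\to\infty$. For the forward direction this is essentially automatic; for the backward direction one substitutes (\ref{5}) back into the identity (\ref{2}) of Lemma~\ref{lem1}.

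For the forward direction, assume $f\in V_g$. Lemma~\ref{lem2} furnishes
\[
\widehat{f}(x) = I_{f,N}(x) + \sum_{n=1}^{N-1} I_{g,n}(x) + \widehat{g}(x)
\]
for every $N\in\mathbb N$ and $x\in\mathbb R$. I would show that $I_{f,N}(x) \to \widehat{f}(0)\int_{\Omega^\infty} \exp\{ixZ_\infty(\varpi)\}\,P^\infty(d\varpi)$ as $N\to\infty$; this forces the partial sums $\sum_{n=1}^{N-1}I_{g,n}(x)$ to converge, and rearranging yields (\ref{5}). Inside the integrand of $I_{f,N}(x)$, the exponential converges a.s.\ to $\exp\{ixZ_\infty(\varpi)\}$ by the absolute convergence assumption (\ref{4}), while $\widehat{f}(x/(L_1\cdots L_N))\to\widehat{f}(0)$ a.s.\ once we know $|L_1\cdots L_N|\to\infty$ a.s.; with the uniform bound $|\widehat{f}|\le\|f\|_1$, dominated convergence then closes the step.

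The a.s.\ divergence $|L_1\cdots L_N|\to\infty$ is the main technical point, and is precisely where both (\ref{m}) and (\ref{4}) enter in concert. I would pick $\epsilon>0$ with $\delta:=P(|M|>\epsilon)>0$; since the $M_n$'s are i.i.d., the second Borel--Cantelli lemma yields $\{|M_n|>\epsilon\}$ infinitely often, a.s. If the event $\{\liminf_n|L_1\cdots L_n|\le K\}$ had positive probability for some $K$, then, conditionally on the $\sigma$-algebra generated by $(L_k)$, the Bernoulli-$\delta$ indicators $\mathbf{1}_{\{|M_n|>\epsilon\}}$ would sum to $\infty$ along the random infinite subsequence $\{n:|L_1\cdots L_n|\le K\}$, producing $|M_n/(L_1\cdots L_n)|>\epsilon/K$ infinitely often on a set of positive probability, contradicting (\ref{4}). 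Hence $|L_1\cdots L_n|\to\infty$ a.s.

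For the backward direction, assume (\ref{5}); by Lemma~\ref{lem1} it suffices to verify (\ref{2}). Substituting (\ref{5}) (with $x$ replaced by $x/L(\omega)$) into the integrand of $I_{f,1}(x)=\int_\Omega \exp\{ixM/L\}\widehat{f}(x/L)\,P(d\omega)$ and splitting yields three contributions. The $\widehat{g}(x/L)$-term equals $I_{g,1}(x)$ by definition; the $\widehat{f}(0)$-term collapses to $\widehat{f}(0)\int \exp\{ixZ_\infty\}\,dP^\infty$ via Fubini and the shift identity $Z_\infty(\omega,\varpi) = M(\omega)/L(\omega) + Z_\infty(\varpi)/L(\omega)$ together with $P\otimes P^\infty = P^\infty$; the $\sum_n I_{g,n}(x/L)$-term yields $\sum_{n\ge 2}I_{g,n}(x)$ via the termwise identity $\int_\Omega \exp\{ixM/L\}I_{g,n}(x/L)\,P(d\omega)=I_{g,n+1}(x)$ (the same Fubini/concatenation) combined with a dominated-convergence passage to the limit in the partial-sum recursion $S_{N+1}(x)=I_{g,1}(x)+\int_\Omega \exp\{ixM/L\}S_N(x/L)\,P(d\omega)$, where $S_N:=\sum_{n=1}^N I_{g,n}$ converges pointwise by (\ref{5}). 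Reassembly of the three contributions gives $I_{f,1}(x)+\widehat{g}(x)=\widehat{f}(x)$, which is (\ref{2}).
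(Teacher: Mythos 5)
Your overall architecture is the same as the paper's: the forward direction is Lemma \ref{lem2} plus dominated convergence (using $|\widehat f|\le\|f\|_1$, continuity of $\widehat f$ at $0$, and the a.s.\ convergence of the exponents), and the backward direction substitutes (\ref{5}) into the left-hand side of (\ref{2}) and reassembles it via Fubini and the concatenation identities, exactly as in the paper's computation. The one genuine gap is in what you yourself call the main technical point, namely the proof that $|L_1\cdots L_N|\to\infty$ a.s. The paper obtains this fact --- i.e.\ (\ref{7b}) --- by citing Theorem 2.1 of \cite{GM2000} on stability of perpetuities; you replace the citation by a Borel--Cantelli argument, and that argument is incorrect as written. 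You assert that, conditionally on the $\sigma$-algebra generated by $(L_k)_{k\ge1}$, the indicators $\mathbf{1}_{\{|M_n|>\epsilon\}}$ are Bernoulli($\delta$). But $L_n$ and $M_n$ are functions of the \emph{same} coordinate $\omega_n$ of $\varpi\in\Omega^\infty$: the pairs $(L_n,M_n)$ are i.i.d.\ across $n$, while within a fixed $n$ the two variables may be completely dependent. For instance, if $M=L-1$ (admissible whenever $P(L=1)<1$), then $(M_k)$ is a deterministic function of $(L_k)$, and conditionally on $\sigma((L_k))$ each indicator $\mathbf{1}_{\{|M_n|>\epsilon\}}$ is a.s.\ equal to $0$ or $1$, never Bernoulli($\delta$) with $0<\delta<1$. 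Worse, your random index set $\{n:|L_1\cdots L_n|\le K\}$ itself involves $L_n$, hence $\omega_n$, so membership of $n$ in the subsequence is correlated with $\{|M_n|>\epsilon\}$ even under a correct conditioning. The contradiction you derive therefore does not follow from what you have proved.

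The idea is repairable, and the repair is worth recording because it yields a self-contained substitute for the citation of \cite{GM2000}. Write $\frac{M_n}{L_1\cdots L_n}=\frac{B_n}{\Pi_{n-1}}$, where $B_n=M_n/L_n$ (legitimate since $P(L=0)=0$) and $\Pi_{n-1}=L_1\cdots L_{n-1}$. By (\ref{m}) choose $\epsilon>0$ with $\delta:=P(|M/L|>\epsilon)>0$. Now $\{|\Pi_{n-1}|\le K\}$ is $\mathcal F_{n-1}$-measurable, where $\mathcal F_{n-1}=\sigma(\omega_1,\dots,\omega_{n-1})$, while $\{|B_n|>\epsilon\}$ is independent of $\mathcal F_{n-1}$; hence for $A_n:=\{|\Pi_{n-1}|\le K\}\cap\{|B_n|>\epsilon\}$ one has $P(A_n\mid\mathcal F_{n-1})=\delta\,\mathbf{1}_{\{|\Pi_{n-1}|\le K\}}$. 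L\'evy's conditional (second) Borel--Cantelli lemma then gives, a.s., $\{A_n \hbox{ i.o.}\}=\{\sum_n\mathbf{1}_{\{|\Pi_{n-1}|\le K\}}=\infty\}$, and the latter event contains (up to index shift) $\{\liminf_n|\Pi_n|\le K-1\}$. On $\{A_n \hbox{ i.o.}\}$ the terms of (\ref{4}) exceed $\epsilon/K$ infinitely often, which is incompatible with the assumed a.s.\ (absolute) convergence of (\ref{4}); hence $P(\liminf_n|\Pi_n|\le K)=0$ for every $K$, i.e.\ $|\Pi_n|\to\infty$ a.s., which is (\ref{7b}). With this correction your forward direction is complete. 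Your backward direction follows the paper's computation; note only that the interchange of $\sum_{n}$ and $\int_\Omega$ there is justified neither in your sketch (your ``dominated-convergence passage'' lacks a dominating function for the partial sums $S_N$) nor, in fairness, in the paper itself, so I do not count it against you.
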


\begin{proof}
According to Theorem 2.1 from \cite{GM2000}, we conclude that 
\begin{equation}\label{7b}
\lim_{n\to\infty}\frac{x}{L_1(\varpi)\cdots L_n(\varpi)}=0
\end{equation}
for all $x\in\mathbb R$ and almost all $\varpi\in\Omega^\infty$.

Assume $f\in V_g$. Then by the Lebesgue dominated convergence theorem we obtain
$$
\lim_{N\to\infty}I_{f,N}(x)=\int_{\Omega^\infty}\exp\{ixZ_\infty(\varpi)\}\widehat{f}(0)P^\infty(d\varpi)
$$
for every $x\in\mathbb R$. This jointly with Lemma \ref{lem2} shows that the series 
$\sum_{n=1}^\infty I_{g,n}(x)$ converges for every $x\in\mathbb R$ and, in consequence, (\ref{5}) holds for every $x\in\mathbb R$.

To prove the converse implication assume that (\ref{5}) holds for every  $x\in\mathbb R$. Then
\begin{eqnarray*}
&&\hspace{-20ex}\int_\Omega\exp\left\{ix\frac{M(\omega)}{L(\omega)}\right\}\widehat{f}\left(\frac{x}{L(\omega)}\right)P(d\omega)\\
\hspace{15ex}&=&\int_{\Omega}\exp\left\{ix\frac{M(\omega)}{L(\omega)}\right\}\left[\widehat{f}(0)\!\int_{\Omega^\infty}
\exp\left\{ix\frac{Z_\infty(\varpi)}{L(\omega)}\right\}P^\infty(d\varpi)\right.\\
&&\left.+\sum_{n=1}^\infty I_{g,n}\left(\frac{x}{L(\omega)}\right)+\widehat{g}
\left(\frac{x}{L(\omega)}\right)\right]P(d\omega)\\
&=&\widehat{f}(0)\int_{\Omega^\infty}\!\!\!\exp\left\{ix\frac{M_0(\varpi)}{L_0(\varpi)}
+ix\frac{Z_\infty(\varpi)}{L_0(\varpi)}\right\}P^\infty(d\varpi)\\
&&+\sum_{n=1}^\infty\int_{\Omega^\infty}\!\!\!\!\exp\!\left\{\!ix\frac{M_0(\varpi)}{L_0(\varpi)}\!\right\}
\!\left[\exp\!\left\{ix\!\sum_{k=1}^n\frac{M_k(\varpi)}{L_0(\varpi)\dots L_k(\varpi)}\!\right\}\right.\\
&&\left.\hspace{7ex}\widehat{g}\left(\frac{x}{L_0(\varpi)\dots L_n(\varpi)}\right)+
\widehat{g}\left(\frac{x}{L_0(\varpi)}\right)\right]P^\infty(d\varpi)\\
&=&\widehat{f}(0)\int_{\Omega^\infty}\exp\left\{ixZ_\infty(\varpi)\right\}P^\infty(d\varpi)+\sum_{n=1}^\infty I_{g,n}(x)\\
&=&\widehat{f}(x)-\widehat{g}(x)
\end{eqnarray*}
for every $x\in\mathbb R$. Finally, by Lemma \ref{lem1} we obtain that $f\in V_g$.
\end{proof}

Condition (\ref{m}) is satisfied in all important applications of inhomogeneous (and homogeneous) refinement equations. In the boundary case 
\begin{equation}\label{m1}
P(M=0)=1.
\end{equation}
we have the following result.

\begin{theorem}\label{thm1a}
Assume $f\in L^1(\mathbb R)$. If $(\ref{m1})$ holds and 
\begin{equation}\label{6a}
0<\int_\Omega\log|L(\omega)|P(d\omega)<+\infty,
\end{equation}
then $f\in V_g$ if and only if
\begin{equation}\label{7a}
\widehat{f}(x)=\widehat{f}(0)+\sum_{n=1}^\infty I_{g,n}(x)+\widehat{g}(x)
\end{equation}
for every $x\in\mathbb R$.
\end{theorem}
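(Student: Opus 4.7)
The plan is to mimic the proof of Theorem \ref{thm1}, exploiting the fact that under (\ref{m1}) the random series (\ref{4}) collapses to zero almost surely, so the factor $\exp\{ixZ_\infty(\varpi)\}$ becomes the constant $1$ and need not be carried through the computation. The starting point is Lemma \ref{lem2}: for $f\in V_g$ we have $\widehat f(x)=I_{f,N}(x)+\sum_{n=1}^{N-1}I_{g,n}(x)+\widehat g(x)$ for every $N\in\mathbb N$ and every $x\in\mathbb R$, and the task reduces to identifying $\lim_{N\to\infty}I_{f,N}(x)$.

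For the forward implication I would first invoke Theorem 2.1 of \cite{GM2000}, exactly as in the proof of Theorem \ref{thm1}, to deduce from (\ref{6a}) that $x/(L_1(\varpi)\cdots L_n(\varpi))\to 0$ for every $x\in\mathbb R$ and almost all $\varpi\in\Omega^\infty$. Since $P(M=0)=1$ forces $M_k(\varpi)=0$ for $P^\infty$-almost all $\varpi$ and every $k$, the phase factor in $I_{f,N}(x)$ is almost surely $1$, so $I_{f,N}(x)=\int_{\Omega^\infty}\widehat f(x/(L_1\cdots L_N))\,P^\infty(d\varpi)$. Using that $\widehat f$ is continuous at $0$ with $|\widehat f|\le\|f\|_1$, the Lebesgue dominated convergence theorem yields $\lim_{N\to\infty}I_{f,N}(x)=\widehat f(0)$. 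Lemma \ref{lem2} then forces the series $\sum_{n\ge 1}I_{g,n}(x)$ to converge, and passing to the limit in the identity of Lemma \ref{lem2} delivers (\ref{7a}).

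For the converse I would follow the template of the second half of the proof of Theorem \ref{thm1}: assuming (\ref{7a}), I verify condition (\ref{2}) from Lemma \ref{lem1}, which under (\ref{m1}) reduces to $\widehat f(x)=\int_\Omega\widehat f(x/L(\omega))\,P(d\omega)+\widehat g(x)$. Substituting the right-hand side of (\ref{7a}) for $\widehat f(x/L(\omega))$ and applying Fubini's theorem (the integrand is uniformly bounded by $\|g\|_1$, so the interchange of $\int_\Omega$ with the series is legitimate once one knows $\sum I_{g,n}(x)$ converges, which is the hypothesis), one uses $P(M=0)=1$ to recognize the elementary identities $\int_\Omega\widehat g(x/L(\omega))\,P(d\omega)=I_{g,1}(x)$ and $\int_\Omega I_{g,n}(x/L(\omega))\,P(d\omega)=I_{g,n+1}(x)$. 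The terms reassemble into $\widehat f(0)+\sum_{n\ge 1}I_{g,n}(x)=\widehat f(x)-\widehat g(x)$, giving (\ref{2}) and hence $f\in V_g$ by Lemma \ref{lem1}.

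The one step that requires genuine care is the invocation of \cite[Theorem 2.1]{GM2000}: condition (\ref{6a}) is precisely what guarantees $|L_1\cdots L_n|\to\infty$ almost surely (via the strong law of large numbers applied to $\log|L_k|$), and without it the argument for $I_{f,N}(x)\to\widehat f(0)$ breaks down. Everything else is bookkeeping: the cancellation of the $M_k$'s trivializes the combinatorics that made Theorem \ref{thm1} delicate, and once Fubini is justified by the uniform bound $|I_{g,n}|\le\|g\|_1$, the converse calculation is a direct reshuffling of one index in the series.
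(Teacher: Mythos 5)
Your proposal is correct and follows essentially the same route as the paper: Lemma \ref{lem2} plus dominated convergence to identify $\lim_{N\to\infty}I_{f,N}(x)=\widehat f(0)$ in the forward direction, and for the converse the substitution of (\ref{7a}) into condition (\ref{2}) with the reindexing $\int_\Omega I_{g,n}(x/L(\omega))\,P(d\omega)=I_{g,n+1}(x)$, which is precisely the ``evident modification'' of the proof of Theorem \ref{thm1} that the paper leaves to the reader. One correction, though: you cannot invoke \cite[Theorem 2.1]{GM2000} ``exactly as in the proof of Theorem \ref{thm1}''. In that proof the cited theorem extracts $|L_1(\varpi)\cdots L_n(\varpi)|\to\infty$ from the \emph{assumed almost sure absolute convergence of the series} (\ref{4}) together with the non-degeneracy condition (\ref{m}); under (\ref{m1}) condition (\ref{m}) fails and the series is identically zero, so that theorem yields nothing here. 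The correct mechanism is the one you name parenthetically at the end: Kolmogorov's strong law of large numbers applied to $\log|L_k|$, which under (\ref{6a}) gives $\bigl(\prod_{k=1}^n|L_k(\varpi)|\bigr)^{1/n}\to\exp\bigl\{\int_\Omega\log|L(\omega)|\,P(d\omega)\bigr\}>1$ almost surely, hence (\ref{7b}) --- and this is exactly the paper's argument. Finally, your justification of the sum--integral interchange in the converse (a uniform bound $\|g\|_1$ on the individual terms $I_{g,n}$) does not by itself dominate the partial sums, but the paper performs the same interchange silently in Theorem \ref{thm1}, so this is not a point where you diverge from the source.
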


\begin{proof}
From (\ref{m1}) we conclude that series (\ref{4}) is well defined and $Z_\infty(\varpi)=0$ for almost all $\varpi\in\Omega^\infty$.  By the Kolmogorov strong law of large numbers we have
$$
\lim_{n\to\infty}\left(\prod_{k=1}^n|L_k(\varpi)|\right)^\frac{1}{n}=\exp\left\{\lim_{n\to\infty}\frac{1}{n}\sum_{k=1}^n\log|L_k(\varpi)|\right\}=\exp\left\{\int_\Omega\log|L(\omega)|P(d\omega)\right\}
$$
for almost all $\varpi\in\Omega^\infty$. Thus applying (\ref{6a}) we infer that (\ref{7b}) holds for all $x\in\mathbb R$ and almost all $\varpi\in\Omega^\infty$.

Assume $f\in V_g$. Then by the Lebesgue dominated convergence theorem we obtain
$\lim_{N\to\infty}I_{f,N}(x)=\widehat{f}(0)$ for every $x\in\mathbb R$. This jointly with Lemma \ref{lem2} shows that the series $\sum_{n=1}^\infty I_{g,n}(x)$ converges for every $x\in\mathbb R$ and, in consequence, (\ref{7a}) holds for every $x\in\mathbb R$.

The proof of the converse implication is similar to that of Theorem \ref{thm1}, with an evident  modification.
\end{proof}

Note that under conditions (\ref{m1}) and (\ref{6a}) the set $V_g$ consist of at most one member; indeed, if $f_1,f_2\in V_g$, then $f_1-f_2\in L^1(\mathbb R)$ and by Theorem \ref{thm1a} we have 
$$
\widehat{(f_1-f_2)}(x)=\widehat{f_1}(x)-\widehat{f_2}(x)=\widehat{f_1}(0)-\widehat{f_2}(0)
$$
for every $x\in\mathbb R$, hence $\widehat{f_1}(0)=\widehat{f_2}(0)$, and in consequence $f_1=f_2$.

If (\ref{6a}) holds, then the series (\ref{4}) converges absolutely for almost all $\varpi\in\Omega^\infty$ provided $\int_\Omega\log\max\{|L^{-1}(\omega)M(\omega)|,1\}P(d\omega)<+\infty$ and
\begin{equation}\label{cc}
P(cL+M=c)<1\hspace{3ex}\hbox{ for every }c\in\mathbb R
\end{equation}
(see \cite{G1974}, cf. \cite{GM2000}). Observe that (\ref{cc}) rules out (\ref{m1}); indeed putting $c=0$ in (\ref{cc}) we obtain (\ref{m}).  It is known that replacing condition (\ref{6a}) by 
\begin{equation}\label{6}
-\infty<\int_\Omega\log|L(\omega)|P(d\omega)<0,
\end{equation}
series (\ref{4})  diverges if (\ref{cc}) holds (see \cite{GM2000}). However, in the next result, convergence of series (\ref{4}) is not relevant.

\begin{theorem}\label{thm1b}
Assume $f\in L^1(\mathbb R)$.  If $(\ref{6})$ holds, then $f\in V_g$ if and only if
\begin{equation}\label{7}
\widehat{f}(x)=\sum_{n=1}^\infty I_{g,n}(x)+\widehat{g}(x)
\end{equation}
for every $x\in\mathbb R$.
\end{theorem}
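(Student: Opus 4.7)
The plan is to imitate the proofs of Theorems \ref{thm1} and \ref{thm1a}: apply Lemma \ref{lem2} and pass to the limit $N\to\infty$. Under (\ref{6}), however, the products $L_1\cdots L_n$ decay to zero rather than having bounded behavior, so the arguments $x/(L_1\cdots L_n)$ escape to infinity, and the Riemann--Lebesgue lemma takes over from the continuity of $\widehat f$ at the origin that was used earlier. The first step is to invoke Kolmogorov's strong law of large numbers for the i.i.d.\ sequence $(\log|L_n|)_{n\in\mathbb N}$: under (\ref{6}),
$$
\frac{1}{n}\sum_{k=1}^n\log|L_k(\varpi)|\longrightarrow\int_\Omega\log|L(\omega)|P(d\omega)<0
$$
for almost every $\varpi\in\Omega^\infty$, so that $|L_1(\varpi)\cdots L_n(\varpi)|\to 0$ and, for every $x\neq 0$, $|x/(L_1(\varpi)\cdots L_n(\varpi))|\to+\infty$ almost surely.

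For the forward implication, Lemma \ref{lem2} supplies
$$
\widehat f(x)=I_{f,N}(x)+\sum_{n=1}^{N-1}I_{g,n}(x)+\widehat g(x)
$$
for every $N\in\mathbb N$ and every $x\in\mathbb R$. Since $\widehat f$ is bounded by $\|f\|_{L^1}$ and vanishes at infinity by Riemann--Lebesgue, the dominated convergence theorem gives $I_{f,N}(x)\to 0$ for every $x\neq 0$. Letting $N\to\infty$ then shows both that the series $\sum_n I_{g,n}(x)$ converges and that (\ref{7}) holds for all $x\neq 0$. Setting $x=0$ in Lemma \ref{lem2} forces the standard necessary condition $\widehat g(0)=0$, hence $I_{g,n}(0)=0$ for every $n$, so (\ref{7}) at the origin reduces to $\widehat f(0)=0$; this last equality follows from the continuity of $\widehat f$ once (\ref{7}) has been established on $\mathbb R\setminus\{0\}$.

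The converse is proved exactly in the spirit of the converse of Theorem \ref{thm1}: assuming (\ref{7}) for every $x$, insert it for $\widehat f(x/L(\omega))$ in the right-hand side of (\ref{2}) and apply Fubini to swap sum and integral; the key re-indexing identities
$$
\int_\Omega\exp\!\left\{\tfrac{ixM(\omega)}{L(\omega)}\right\}I_{g,n}\!\left(\tfrac{x}{L(\omega)}\right)P(d\omega)=I_{g,n+1}(x),\quad\int_\Omega\exp\!\left\{\tfrac{ixM(\omega)}{L(\omega)}\right\}\widehat g\!\left(\tfrac{x}{L(\omega)}\right)P(d\omega)=I_{g,1}(x)
$$
(both direct Fubini computations on $\Omega\times\Omega^\infty$) then telescope the series and produce $\widehat f(x)-\widehat g(x)$ on the right-hand side of (\ref{2}), whence $f\in V_g$ by Lemma \ref{lem1}. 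The only genuinely delicate point is the behaviour at $x=0$ in the forward direction, where (\ref{7}) implicitly demands $\widehat f(0)=0$; this has to be extracted from continuity of $\widehat f$ rather than from the dominated convergence argument that handles $x\neq 0$.
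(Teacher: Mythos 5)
Your proposal is correct and follows essentially the same route as the paper: Kolmogorov's strong law under (\ref{6}) to send $|x/(L_1\cdots L_N)|\to\infty$ almost surely, then Riemann--Lebesgue plus dominated convergence to kill $I_{f,N}(x)$ for $x\neq 0$ and Lemma \ref{lem2} to obtain (\ref{7}) off the origin, with continuity of $\widehat f$ and $\widehat g$ handling $x=0$; the converse by substituting (\ref{7}) into (\ref{2}) and re-indexing is exactly the paper's "evident modification" of the converse in Theorem \ref{thm1}. Your explicit remark that $\widehat g(0)=0$ makes $I_{g,n}(0)=0$ only spells out what the paper leaves implicit, so there is no substantive difference.
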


\begin{proof}
Assume $f\in V_g$. Condition (\ref{6}) jointly with the Kolmogorov strong law of large numbers  implies
$$
\lim_{N\to\infty}\left|\frac{x}{L_1(\varpi)\cdots L_N(\varpi)}\right|=+\infty
$$
for all $x\in\mathbb R\setminus\{0\}$ and almost all $\varpi\in\Omega^\infty$. This jointly with the Lebesgue dominated convergence theorem and the Lebesgue-Riemann lemma yields
$$
\lim_{N\to\infty}|I_{f,N}(x)|\leq\int_{\Omega^\infty}\lim_{N\to\infty}\left|\widehat{f}\left(\frac{x}{L_1(\varpi)\cdots L_N(\varpi)}\right)\right|P^\infty(d\varpi)=0
$$
for every $x\in\mathbb R\setminus\{0\}$.  Therefore, Lemma \ref{lem2} shows that the series $\sum_{n=1}^\infty I_{g,n}(x)$ converges for every $x\in\mathbb R\setminus\{0\}$ and, in consequence, (\ref{7}) holds for every $x\in\mathbb R\setminus\{0\}$. By the continuity of  $\widehat{f}$ and $\widehat{g}$ we infer that (\ref{7}) also holds for $x=0$.

The proof of the converse implication is similar to that of Theorem \ref{thm1}, with an evident  modification.
\end{proof}

Note that under condition (\ref{6}) the set $V_g$ consists of at most one member, and moreover, if $f\in V_g$, then $\widehat{f}(0)=0$.
 
We end this section pointing out that in the critical case 
\begin{equation}\label{6b}
\int_\Omega \log |L(\omega)|P(d\omega)=0
\end{equation}
series (\ref{4}) diverges in the case where (\ref{cc}) holds (see \cite{BBE1997}, cf. \cite{GM2000}) or converges in the case where (\ref{m1}) holds. But, independent of the case, it is well known that under assumption (\ref{6b}) equation (\ref{e}) has completely different behavior than under assumption (\ref{6a}) or (\ref{6}). The next two examples show that in the critical case equation (\ref{e}) can have lots of integrable solutions.

\begin{example}\label{ex1}
Put  $P(\omega)=\frac{1}{2}$,  $L(\omega)=\omega$ and $M(\omega)=0$ for every $\omega\in\Omega=\{-1,1\}$. Then $(\ref{6b})$ and $(\ref{m1})$ hold, and moreover, equation $(\ref{e})$ takes the form
\begin{equation}\label{e1}
f(x)=\frac{1}{2}f(-x)+\frac{1}{2}f(x)+g(x).
\end{equation}
It is easy to see, taking $g$ to be even and $h\in L^1(\mathbb R)$ to be odd, that the function $f=h+g$ satisfies $(\ref{e1})$.
\end{example}

\begin{example}\label{ex2}
Put  $P(\omega)=\frac{1}{2}$, $L(\omega)=\omega$ and $M(\omega)=1-\omega$ for every $\omega\in\Omega=\{-1,1\}$. Then $(\ref{6b})$ and $(\ref{cc})$ hold, and moreover, equation $(\ref{e})$ takes the form
\begin{equation}\label{e2}
f(x)=\frac{1}{2}f(-x+2)+\frac{1}{2}f(x)+g(x).
\end{equation}
It is easy to see, taking $g$ to have  the graph symmetric with respect to the point $(1,0)$ and $h\in L^1(\mathbb R)$ to have the graph symmetric with respect to the vertical line $x=1$, that the function $f=h+g$ satisfies $(\ref{e2})$.
\end{example}

The above examples show why we are not interested in the critical case in this paper. More details on equation (\ref{e}) in the critical case can be found in \cite{KCG1990}.

\section{Characterization of $V_g$ by Radon-Nikodym derivatives} 

Throughout this section we assume that $L$ is positive almost everywhere on $\Omega$; the case where $L$ is negative almost everywhere on $\Omega$ is also acceptable for our purpose, but it will not be considered in this paper. 

It is known that every function from the set $V_0$ is determined, up to a  multiplicative constant, by the Radon-Nikodym derivative of their integrals over $(-\infty,x]$, $x\in\mathbb R$ with respect to the one dimensional Lebesgue measure (see \cite[Section 3.4]{KM2013}). It turns out that such a property also have all elements from the set $V_g$. To be more precise let us define a function $G\colon\mathbb R\to\mathbb R$ by
$$
G(x)=\int_{-\infty}^x g(t)dt.
$$
It is clear that the function $G$ is bounded.

We have the following counterpart of Proposition 2 from \cite{KM2008a}.

\begin{proposition}\label{prop1}
Assume $f\in L^1(\mathbb R)$. Then $f\in V_g$ if and only if the function $F\colon\mathbb R\to\mathbb R$ given by 
\begin{equation}\label{8}
F(x)=\int_{-\infty}^xf(t)dt
\end{equation} 
satisfies 
\begin{equation}\label{d}
F(x)=\int_{\Omega}F(L(\omega)x-M(\omega))P(d\omega)+G(x)
\end{equation}
for every $x\in\mathbb R$.
\end{proposition}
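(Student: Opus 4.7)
The plan is to prove both directions by integrating/antidifferentiating across the equation and interchanging the order of integration via Fubini, with the substitution $u = L(\omega)t - M(\omega)$ (legitimate because $L>0$ almost surely, and hence $|L(\omega)|=L(\omega)$) converting the inner integral into the integral of $f$ up to $L(\omega)x - M(\omega)$.

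For the forward implication, assume $f \in V_g$. For every $x \in \mathbb R$ we integrate both sides of (\ref{e}) on $(-\infty,x]$. The left side becomes $F(x)$, the $g$-term becomes $G(x)$, and the remaining term, after swapping the order of integration, equals
\[
\int_{\Omega}\int_{-\infty}^x |L(\omega)|\,f(L(\omega)t-M(\omega))\,dt\,P(d\omega)
=\int_{\Omega}F(L(\omega)x-M(\omega))\,P(d\omega).
\]
Fubini is applicable because the total $L^1$-mass of $(t,\omega)\mapsto |L(\omega)|\,f(L(\omega)t-M(\omega))$ on $\mathbb R\times\Omega$ equals $\|f\|_1$ by the same substitution (together with $P(L=0)=0$). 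Thus (\ref{d}) holds for every $x$; the pointwise validity (not just a.e.) follows from the fact that $F$, $G$ and the integral on the right are continuous functions of $x$ (the latter by dominated convergence, since $F$ is bounded).

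For the converse, assume (\ref{d}) holds for every $x$. Define
\[
h(t)=\int_{\Omega}|L(\omega)|\,f(L(\omega)t-M(\omega))\,P(d\omega)+g(t),
\]
which belongs to $L^1(\mathbb R)$ by the Fubini estimate above. Its antiderivative $H(x)=\int_{-\infty}^x h(t)\,dt$ satisfies, by the same swap-and-substitute computation,
\[
H(x)=\int_{\Omega}F(L(\omega)x-M(\omega))\,P(d\omega)+G(x)=F(x)
\]
for every $x \in \mathbb R$. Since $F$ and $H$ are antiderivatives of $f$ and $h$ respectively, and $F=H$, we get $f=h$ almost everywhere, i.e.\ $f \in V_g$.

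The only real subtlety is the justification of Fubini's theorem and the change of variables, which uses the standing assumption $P(L=0)=0$ together with the section hypothesis $L>0$ almost surely; once those are in hand, both directions reduce to the observation that two $L^1$ functions with equal indefinite integrals coincide almost everywhere.
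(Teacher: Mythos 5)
Your proof is correct and follows essentially the same route as the paper's: both directions are handled by integrating over $(-\infty,x]$, applying Fubini together with the substitution $u=L(\omega)t-M(\omega)$ (valid since $L>0$ a.e.\ in this section), and concluding from the fact that two $L^1$ functions with identical indefinite integrals agree almost everywhere. Your version merely makes explicit two points the paper leaves implicit (the integrability bound justifying Fubini and the final a.e.\ identification), which is fine.
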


\begin{proof}
Assume $f\in V_g$. By the Fubini theorem we obtain
\begin{eqnarray*}
F(x)&=&\int_{-\infty}^xf(t)dt=
\int_{\Omega}\int_{-\infty}^xL(\omega)f(L(\omega)t-M(\omega))dtP(d\omega)+\int_{-\infty}^xg(t)dt\\
&=&\int_{\Omega}\int_{-\infty}^{L(\omega)x-M(\omega)}f(y)dyP(d\omega)+G(x)=\int_{\Omega}F(L(\omega)x-M(\omega))P(d\omega)+G(x)
\end{eqnarray*}
for every $x\in\mathbb R$.

Conversely, if a function $F\colon\mathbb R\to\mathbb R$ given by (\ref{8}) satisfies (\ref{d}) for every $x\in\mathbb R$, then Fubini theorem yields
$$
\int_{-\infty}^xf(t)dt=F(x)=
\int_{-\infty}^x\left[\int_{\Omega}L(\omega)f(L(\omega)t-M(\omega))P(d\omega)+g(t)\right]dt
$$
for every $x\in\mathbb R$. Hence $f\in V_g$.
\end{proof}

Now we need to write a precise formula for iterates (in the sense of Kuczma and Baron \cite{BK1977}) of the random map $\varphi\colon\mathbb R\times\Omega\to\mathbb R$ given by (\ref{rap}); the iterates are defined as follows:
$$
\varphi^1(x,\varpi)=\varphi(x,\omega_1)\hspace{3ex}\hbox{ and }\hspace{3ex}
\varphi^{n+1}(x,\varpi)=\varphi(\varphi^n(x,\varpi),\omega_{n+1})
$$
for all $x\in\mathbb R$ and $\varpi=(\omega_1,\omega_2,\dots)\in\Omega^{\infty}$.

An easy calculation shows that
$$
\varphi^n(x,\varpi)=x\prod_{k=1}^n L_k(\varpi)-\sum_{i=1}^n M_i(\varpi)\prod_{j=i+1}^n L_j(\varpi)
$$
for all $n\in\mathbb N$, $x\in\mathbb R$ and $\varpi\in\Omega^{\infty}$. Then, defining a 
random variable $Z_n\colon\Omega^\infty\to\mathbb R$ by
\begin{equation}\label{Z}
Z_n(\varpi)=-\sum_{i=1}^n M_i(\varpi)\prod_{j=1}^{n-i} L_j(\varpi)
\end{equation}
and a  transformation  $\sigma_n\colon\Omega^\infty\to\Omega^\infty$ by $\sigma_n(\omega_1,\omega_2,\dots)=(\omega_n,\dots,\omega_1,\omega_{n+1},\dots)$ we obtain
$$
\varphi^n(x,\sigma_n(\varpi))=x\prod_{k=1}^n L_k(\varpi)+Z_n(\varpi)
$$
for all  $n\in\mathbb N$, $x\in\mathbb R$ and $\varpi\in\Omega^{\infty}$.  It can be inferred from \cite{G1974} (see also \cite{GM2000}) that assuming (\ref{6}) and
\begin{equation}\label{10}
\int_\Omega\log\max\{|M(\omega)|,1\}P(d\omega)<\infty 
\end{equation} 
the sequence of iterates $(\varphi^n(x,\sigma_n(\varpi)))_{n\in\mathbb N}$ converges for all $x\in\mathbb R$ and almost all $\varpi\in\Omega^\infty$,  and moreover, the limit is independent of $x\in\mathbb R$. Since for all $x\in\mathbb R$ and $n\in\mathbb N$ random variables $\varphi^n(x,\cdot)$ and $\varphi^n(x,\sigma_n(\cdot))$ have the same distributions, it follows that for every $x\in\mathbb R$ the sequence $(\varphi^n(x,\cdot))_{n\in\mathbb N}$ converges in distribution to a random variable $\phi$, which is independent of $x\in\mathbb R$. In particular, the sequence $(Z_n)_{n\in\mathbb N}$ defined by (\ref{Z}) converges in distribution to a random variable $\phi$. Denoting by $\Phi$ the probability distribution function of $\phi$ we can formulate the following result.

\begin{theorem}\label{thm2}
Assume that $M$  is integrable and let
\begin{equation}\label{6c}
\int_\Omega L(\omega)P(d\omega)<1.
\end{equation}
If 
\begin{equation}\label{11}
\int_{\mathbb R}g(t)\Phi(t)dt=\widehat{g}(0)
\end{equation}
and $G$ is Lipschitzian, then equation $(\ref{d})$ has a Lipschitzian solution $F\colon\mathbb R\to\mathbb R$. 

Moreover, if there exists $f\in L^1(\mathbb R)$ such that $(\ref{8})$ holds for every $x\in\mathbb R$, then $f\in V_g$.
\end{theorem}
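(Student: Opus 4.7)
The plan is to construct $F$ as a Neumann-type series obtained by iteratively solving (\ref{d}) from the trivial initial guess, namely
\[
F(x) = \sum_{k=0}^\infty \int_{\Omega^k} G(\varphi^k(x,\varpi))\, P^k(d\varpi),
\]
with the $k=0$ summand understood as $G(x)$. Once this series is shown to converge, it is routine to check that $F$ is Lipschitzian and satisfies (\ref{d}); the moreover clause will then be immediate from Proposition \ref{prop1}.

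Before proving convergence, I would rewrite hypothesis (\ref{11}) in a more usable form. Integration by parts, together with $G(-\infty)=0$ and $G(+\infty)=\widehat g(0)$, converts (\ref{11}) into $\int_{\mathbb R} G\, d\Phi = 0$, i.e.\ $E[G(\phi)]=0$. This is precisely the condition that forces the individual summands above to tend to zero.

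The central step is a geometric bound on the summands. Here I would use the distributional identity $\varphi^k(x,\cdot)\stackrel{d}{=} x\prod_{j=1}^k L_j + Z_k$ from the paper, coupled with the observation that the reversal $\sigma_k$ interchanges $Z_k$ with the forward perpetuity $Y_k = -\sum_{i=1}^k M_i\prod_{j=1}^{i-1}L_j$ without altering $\prod_{j=1}^k L_j$. Because $E|M|<\infty$ and $EL<1$, the sequence $Y_k$ converges in $L^1$ at rate $(EL)^k$ to a limit of law $\Phi$. Combining the Lipschitz hypothesis on $G$ with $E\prod_{j=1}^k L_j = (EL)^k$ and the $L^1$-rate estimate for $Y_k$ then yields a bound of the form
\[
\left|\int_{\Omega^k} G(\varphi^k(x,\varpi))P^k(d\varpi)\right| \leq c(EL)^k\Bigl(|x|+\tfrac{E|M|}{1-EL}\Bigr),
\]
where $c$ is the Lipschitz constant of $G$. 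The same computation, applied to differences, delivers the Lipschitz regularity of $F$ with constant $c/(1-EL)$.

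To verify that $F$ solves (\ref{d}) I would apply Fubini (justified by the absolute convergence just established) together with the composition identity $\varphi^k(\varphi(x,\omega),\varpi) = \varphi^{k+1}(x,(\omega,\varpi))$, which reindexes a $P\otimes P^k$ integral as a $P^{k+1}$ integral and shifts the summation index by one; the resulting expression is exactly $F(x)-G(x)$. The step I expect to be the main obstacle is the coupling in the convergence argument: replacing the backward cocycle $Z_k$ by the forward perpetuity $Y_k$ is only a distributional equality, and one has to be careful to preserve the joint law with $x\prod_{j=1}^k L_j$ so that the Lipschitz estimate can be applied coherently to both parts of the drift.
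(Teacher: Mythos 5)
Your proposal is correct, but it follows a genuinely different route from the paper. The paper does not construct $F$ at all: it quotes a fixed-point theorem of Baron (Lemma \ref{lemB}, i.e.\ assertion (iii) of Corollary 4.1 in \cite{B2009}) and spends the whole proof verifying its hypothesis $\lim_{n\to\infty}\int_{\Omega^\infty}G(\varphi^n(0,\varpi))P^\infty(d\varpi)=0$ at the single point $x_0=0$; this is done qualitatively, by using $\sigma_n$ and Fubini to write $\int_{\Omega^\infty}G(\varphi^n(0,\varpi))P^\infty(d\varpi)=\widehat{g}(0)-\int_{\mathbb R}g(t)\Phi_n(t)dt$ and then letting $\Phi_n\to\Phi$ weakly under dominated convergence, so that (\ref{11}) yields the limit $0$. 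You instead build the solution explicitly as the perturbation series $F(x)=\sum_{k\geq 0}\int_{\Omega^k}G(\varphi^k(x,\varpi))P^k(d\varpi)$ and prove geometric convergence from the $L^1$-estimates
\[
E\Bigl|Y_\infty-Y_k\Bigr|\leq \frac{E|M|}{1-EL}\,(EL)^k,\qquad E\Bigl|x\textstyle\prod_{j=1}^k L_j\Bigr|=|x|(EL)^k ,
\]
combined with $E[G(\phi)]=0$, which is indeed equivalent to (\ref{11}). The coupling issue you flag as the main obstacle is in fact already resolved by the paper's own identity $\varphi^k(x,\sigma_k(\varpi))=x\prod_{j=1}^k L_j(\varpi)+Z_k(\varpi)$: since the index reversal $\sigma_k$ is $P^\infty$-preserving and fixes $\prod_{j=1}^k L_j$ \emph{pointwise}, the passage from the backward cocycle to the forward perpetuity is a pointwise change of variables, not merely a distributional one, so your Lipschitz estimate applies to both parts of the drift simultaneously. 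What the two approaches buy: the paper's argument is shorter but leans on \cite{B2009} as a black box and on the Grincevi\v cjus/Goldie--Maller theory for the very existence of $\Phi$; yours is self-contained, produces an explicit solution formula, an explicit Lipschitz constant $c/(1-EL)$ and a geometric error bound, and it re-derives the existence of $\Phi$ for free (as the law of the a.s.\ and $L^1$ limit of the forward perpetuity, using only $E|M|<\infty$ and $EL<1$). Both proofs dispatch the ``moreover'' clause identically, via Proposition \ref{prop1}. One small point of care in writing yours up: positivity of $L$ (assumed throughout the paper's Section 3) is what makes $E\prod_{j=1}^k L_j=(EL)^k$ usable as a bound on $E|x\prod_{j=1}^k L_j|$, and the Fubini step verifying (\ref{d}) needs the integrability $E|\varphi(x,\cdot)|\leq |x|EL+E|M|<\infty$, which your hypotheses do supply.
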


The proof of Theorem \ref{thm2} is based on assertion (iii) of Corollary 4.1 from \cite{B2009};  this is the real reason for our assumptions about the random variable $L$. For the convenience of the reader, we
repeat this assertion as a lemma formulated for requirements of our needs.

\begin{lemma}\label{lemB}
Assume that $M$  is integrable and let $(\ref{6c})$ holds. If $G$ is Lipschitzian and there exists $x_0\in X$ such that
$$
\lim_{n\to\infty}\int_{\Omega^\infty}G(\varphi^n(x_0,\varpi))P^\infty(d\varpi)=0,
$$
then equation $(\ref{d})$ has a Lipschitzian solution $F\colon\mathbb R\to\mathbb R$. 
\end{lemma}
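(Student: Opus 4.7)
The plan is to construct the Lipschitz fixed point $F$ explicitly as a ``telescoping'' series anchored at the hypothesised point $x_0$. The hypothesis supplies only $\int_{\Omega^\infty}G(\varphi^n(x_0,\varpi))P^\infty(d\varpi)\to 0$, not absolute summability of these expectations, so the naive Neumann iterate $\sum_{k=0}^{n-1}\int_{\Omega^\infty}G(\varphi^k(x,\varpi))P^\infty(d\varpi)$ need not converge. The fix is to subtract the same quantity evaluated at $x_0$ inside each term; the Lipschitz property of $G$ combined with the contraction estimate $\int_{\Omega^\infty}L_1(\varpi)\cdots L_n(\varpi)P^\infty(d\varpi)=\bigl(\int_\Omega L\,dP\bigr)^n$, which exploits positivity of $L$ and the independence of coordinates under $P^\infty$, then forces geometric decay.

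Concretely, I would fix an arbitrary constant $c\in\mathbb{R}$ and set
\[
F(x)=c+\sum_{k=0}^\infty\int_{\Omega^\infty}\bigl[G(\varphi^k(x,\varpi))-G(\varphi^k(x_0,\varpi))\bigr]P^\infty(d\varpi),
\]
with $\varphi^0=\mathrm{id}$. Because the $M$-contributions cancel, one has $\varphi^n(x,\varpi)-\varphi^n(x_0,\varpi)=(x-x_0)\prod_{j=1}^nL_j(\varpi)$, and so Lipschitzianity of $G$ (with constant $\ell_G$) together with (\ref{6c}) gives
\[
\Bigl|\int_{\Omega^\infty}\bigl[G(\varphi^k(x,\varpi))-G(\varphi^k(x_0,\varpi))\bigr]P^\infty(d\varpi)\Bigr|\le \ell_G\,|x-x_0|\Bigl(\int_\Omega L\,dP\Bigr)^k,
\]
which is a summable geometric bound. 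This simultaneously certifies absolute convergence of the defining series at every $x\in\mathbb{R}$ and shows that $F$ is Lipschitz with constant at most $\ell_G/(1-\int_\Omega L\,dP)$.

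To verify that $F$ satisfies (\ref{d}), abbreviate $a_k(x)=\int_{\Omega^\infty}G(\varphi^k(x,\varpi))P^\infty(d\varpi)$ and $b_k=a_k(x_0)$; note $a_0(x)=G(x)$. Using Fubini (justified by the geometric bound above), the product structure of $P^\infty$, and the identity $\varphi^k\bigl(\varphi(x,\omega),(\omega_1',\omega_2',\ldots)\bigr)=\varphi^{k+1}\bigl(x,(\omega,\omega_1',\omega_2',\ldots)\bigr)$, a direct computation gives
\[
\int_\Omega F(\varphi(x,\omega))P(d\omega)+G(x)=c+G(x)+\sum_{k=0}^\infty\bigl(a_{k+1}(x)-b_k\bigr),
\]
and subtracting the definition of $F(x)$ yields a telescoping sum that collapses to $\lim_{N\to\infty}a_N(x)-a_0(x)+G(x)=\lim_{N\to\infty}a_N(x)$. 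The hypothesis $b_n\to 0$ together with $|a_n(x)-b_n|\le\ell_G\,|x-x_0|(\int_\Omega L\,dP)^n\to 0$ propagates the limit from $x_0$ to every $x$, so this difference vanishes and $F$ solves (\ref{d}) pointwise.

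The main bookkeeping obstacle is the Fubini/reindexing step: one must identify the push-forward of $P^\infty$ under the coordinate shift implicit in $\varphi^k\circ\varphi=\varphi^{k+1}$, which relies on the i.i.d.\ structure baked into $\Omega^\infty$. Once this is handled, no compactness or abstract fixed-point theorem is required — the telescoped series defines $F$ in closed form, and the whole verification reduces to summing a geometric progression.
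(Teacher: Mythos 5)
Your proof is correct, and it supplies something the paper deliberately omits: the authors do not prove Lemma \ref{lemB} at all, but quote it (``for the convenience of the reader'') as assertion (iii) of Corollary 4.1 of Baron \cite{B2009}, where it is obtained in the general framework of iterates of random-valued functions satisfying a Lipschitz-in-mean contraction. Your anchored series $F(x)=c+\sum_{k=0}^\infty\int_{\Omega^\infty}\bigl[G(\varphi^k(x,\varpi))-G(\varphi^k(x_0,\varpi))\bigr]P^\infty(d\varpi)$ is a correct self-contained replacement, and you correctly isolated the one genuine difficulty: the hypothesis gives only $b_n=\int_{\Omega^\infty}G(\varphi^n(x_0,\varpi))P^\infty(d\varpi)\to 0$, not summability, so the plain Neumann series must be anchored at $x_0$. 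After that, the exact cancellation $\varphi^n(x,\varpi)-\varphi^n(x_0,\varpi)=(x-x_0)\prod_{j=1}^nL_j(\varpi)$, positivity of $L$, independence of the coordinates under $P^\infty$, and (\ref{6c}) give the geometric bound and the Lipschitz constant $\ell_G/\bigl(1-\int_\Omega L\,dP\bigr)$; writing $TF(x)=\int_\Omega F(\varphi(x,\omega))P(d\omega)+G(x)$, the shift identity and Fubini yield $TF(x)-F(x)=\lim_{N\to\infty}a_N(x)$, which vanishes because $|a_N(x)-b_N|\le\ell_G|x-x_0|\bigl(\int_\Omega L\,dP\bigr)^N$ and $b_N\to 0$. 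Two small points you should make explicit: splitting the $k$-th summand into $a_k(x)-b_k$ requires $a_k(x)$ and $b_k$ to be finite separately, which is immediate here since $G(x)=\int_{-\infty}^x g(t)dt$ is bounded (the paper notes this), or alternatively follows from integrability of $M$, (\ref{6c}) and Lipschitzianity of $G$; and the integrability of $\omega\mapsto F(\varphi(x,\omega))$, needed for the left-hand side of (\ref{d}) to make sense, is precisely where the integrability of $M$ enters. As for what each route buys: the paper's citation keeps the exposition short and situates the lemma within Baron's general convergence-in-law machinery, where your exact affine cancellation is replaced by the mean-contraction estimate $\int_{\Omega^\infty}|\varphi^n(x,\varpi)-\varphi^n(x_0,\varpi)|P^\infty(d\varpi)\le\bigl(\int_\Omega L\,dP\bigr)^n|x-x_0|$ --- with that substitution your telescoping argument goes through verbatim, so your proof in fact recovers the cited assertion in its natural generality; your route buys a closed-form solution, an explicit Lipschitz bound, and complete independence from any fixed-point theorem or weak-convergence apparatus.
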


\noindent{\it Proof of Theorem \ref{thm2}.} 
We first note that condition (\ref{6c}) implies condition (\ref{6}) and integrability of $M$ forces condition (\ref{10}).

For every $n\in\mathbb N$ denote by $\Phi_n$ the probability distribution function of the random variable $Z_n$ defined by (\ref{Z}). Then for every $t\in\mathbb R$, being a point of continuity of $\Phi$, we have $\lim_{n\to\infty}\Phi_n(t)=\Phi(t)$. Hence $\lim_{n\to\infty}g(t)\Phi_n(t)=g(t)\Phi(t)$ for almost all $t\in\mathbb R$. In consequence,  by the Lebesgue dominated convergence theorem and (\ref{11}) we obtain
$$
\lim_{n\to\infty}\int_{\mathbb R}g(t)\Phi_n(t)dt=\int_{\mathbb R}g(t)\Phi(t)dt=\widehat{g}(0).
$$
This jointly with the Fubini theorem gives
\begin{eqnarray*}
\lim_{n\to\infty}\int_{\Omega^\infty}G(\varphi^n(0,\varpi))P^\infty(d\varpi)
&=&\lim_{n\to\infty}\int_{\Omega^\infty}G(\varphi^n(0,\sigma_n(\varpi)))P^\infty(d\varpi)\\ &=&\lim_{n\to\infty}\int_{\Omega^\infty}G(Z_n(\varpi))P^\infty(d\varpi)\\ 
&=&\lim_{n\to\infty}\int_{\Omega^\infty}\int_{-\infty}^{Z_n(\varpi)}g(t)dtP^\infty(d\varpi)\\
&=&\lim_{n\to\infty}\int_{\{(t,\varpi)\in\mathbb R\times\Omega^\infty:t<Z_n(\varpi)\}}g(t)dt P^\infty(d\varpi)\\
&=&\lim_{n\to\infty}\int_{\mathbb R}\int_{\{\varpi\in\Omega^\infty:t<Z_n(\varpi)\}}g(t)P^\infty(d\varpi)dt\\
&=&\lim_{n\to\infty}\int_{\mathbb R}g(t)P^\infty(\{\varpi\in\Omega^\infty:t<Z_n(\varpi)\})dt\\
&=&\lim_{n\to\infty}\int_{\mathbb R}g(t)[1-P^\infty(\{\varpi\in\Omega^\infty:Z_n(\varpi)\leq t\})dt\\
&=&\widehat{g}(0)-\lim_{n\to\infty}\int_{\mathbb R}g(t)\Phi_n(t)dt=0.
\end{eqnarray*}

Now the main part of the statement follows from Lemma \ref{lemB}.

The moreover statement follows from Proposition \ref{prop1}.
\hfill $\square$ 
\medskip 

It is known that each absolutely continuous function $F\colon{\mathbb R}\to\mathbb R$ of bounded variation with $\lim_{x\to-\infty}F(x)=0$ has a suitably understood Radon-Nikodym derivative (due to the Hahn decomposition), i.e., $F$ is represented by (\ref{8}) with $f\in L^1(\mathbb R)$. However,  the following example shows that not all absolutely continuous real functions are represented in such a way.

\begin{example}
There exists a bounded and Lipschitzian function $F\colon\mathbb R\to\mathbb R$ for which does not exist a function $f\in L^1(\mathbb R)$ such that $(\ref{8})$ holds for every $x\in\mathbb R$.
\end{example}

\begin{proof}
Define a function $F\colon\mathbb R\to\mathbb R$ by the Riemann improper integrals
$$
F(x)=\int_{-\infty}^xh(t)dt,
$$
where $h\colon\mathbb R\to\mathbb R$ is given by
$$
h(x)=\left\{ \begin{array}{ccl}
\frac{1}{n+1} & \hbox{ for } & x\in \bigcup_{n=0}^\infty (-2n-1,-2n],\\
-\frac{1}{n+1} & \hbox{ for } & x\in \bigcup_{n=0}^\infty (-2n-2,-2n-1],\\
0 & \hbox{ for } & x\in(0,+\infty).
\end{array}\right.
$$

It is easy to show that $F$ is Lipschitzian and bounded. 
Suppose, to derive a contradiction, that there exists $f\in L^1(\mathbb R)$ such that (\ref{8}) holds for every $x\in X$. Since $\int_x^y f(t)dt=\int_x^y h(t) dt$ for all $x,y\in\mathbb R$, we have $h=f$ 
almost everywhere on $\mathbb R$. Hence $h\in L^1(\mathbb R)$, a contradiction.
\end{proof}

\section*{Acknowledgements}
This research was supported by University of Silesia Mathematics Department (Iterative Functional Equations and Real Analysis program).

The authors are grateful to the referee for a number of helpful suggestions for improvement in the article.

\end{document}